\begin{document}
\baselineskip = 16pt

\newcommand \ZZ {{\mathbb Z}}
\newcommand \NN {{\mathbb N}}
\newcommand \RR {{\mathbb R}}
\newcommand \PR {{\mathbb P}}
\newcommand \AF {{\mathbb A}}
\newcommand \GG {{\mathbb G}}
\newcommand \QQ {{\mathbb Q}}
\newcommand \bcA {{\mathscr A}}
\newcommand \bcC {{\mathscr C}}
\newcommand \bcD {{\mathscr D}}
\newcommand \bcF {{\mathscr F}}
\newcommand \bcG {{\mathscr G}}
\newcommand \bcH {{\mathscr H}}
\newcommand \bcM {{\mathscr M}}
\newcommand \bcJ {{\mathscr J}}
\newcommand \bcL {{\mathscr L}}
\newcommand \bcO {{\mathscr O}}
\newcommand \bcP {{\mathscr P}}
\newcommand \bcQ {{\mathscr Q}}
\newcommand \bcR {{\mathscr R}}
\newcommand \bcS {{\mathscr S}}
\newcommand \bcU {{\mathscr U}}
\newcommand \bcV {{\mathscr V}}
\newcommand \bcW {{\mathscr W}}
\newcommand \bcX {{\mathscr X}}
\newcommand \bcY {{\mathscr Y}}
\newcommand \bcZ {{\mathscr Z}}
\newcommand \goa {{\mathfrak a}}
\newcommand \gob {{\mathfrak b}}
\newcommand \goc {{\mathfrak c}}
\newcommand \gom {{\mathfrak m}}
\newcommand \gon {{\mathfrak n}}
\newcommand \gop {{\mathfrak p}}
\newcommand \goq {{\mathfrak q}}
\newcommand \goQ {{\mathfrak Q}}
\newcommand \goP {{\mathfrak P}}
\newcommand \goM {{\mathfrak M}}
\newcommand \goN {{\mathfrak N}}
\newcommand \uno {{\mathbbm 1}}
\newcommand \Le {{\mathbbm L}}
\newcommand \Spec {{\rm {Spec}}}
\newcommand \Gr {{\rm {Gr}}}
\newcommand \Pic {{\rm {Pic}}}
\newcommand \Jac {{{J}}}
\newcommand \Alb {{\rm {Alb}}}
\newcommand \Corr {{Corr}}
\newcommand \Chow {{\mathscr C}}
\newcommand \Sym {{\rm {Sym}}}
\newcommand \Prym {{\rm {Prym}}}
\newcommand \cha {{\rm {char}}}
\newcommand \eff {{\rm {eff}}}
\newcommand \tr {{\rm {tr}}}
\newcommand \Tr {{\rm {Tr}}}
\newcommand \pr {{\rm {pr}}}
\newcommand \ev {{\it {ev}}}
\newcommand \cl {{\rm {cl}}}
\newcommand \interior {{\rm {Int}}}
\newcommand \sep {{\rm {sep}}}
\newcommand \td {{\rm {tdeg}}}
\newcommand \alg {{\rm {alg}}}
\newcommand \im {{\rm im}}
\newcommand \gr {{\rm {gr}}}
\newcommand \op {{\rm op}}
\newcommand \Hom {{\rm Hom}}
\newcommand \Hilb {{\rm Hilb}}
\newcommand \Sch {{\mathscr S\! }{\it ch}}
\newcommand \cHilb {{\mathscr H\! }{\it ilb}}
\newcommand \cHom {{\mathscr H\! }{\it om}}
\newcommand \colim {{{\rm colim}\, }} 
\newcommand \End {{\rm {End}}}
\newcommand \coker {{\rm {coker}}}
\newcommand \id {{\rm {id}}}
\newcommand \van {{\rm {van}}}
\newcommand \spc {{\rm {sp}}}
\newcommand \Ob {{\rm Ob}}
\newcommand \Aut {{\rm Aut}}
\newcommand \cor {{\rm {cor}}}
\newcommand \Cor {{\it {Corr}}}
\newcommand \res {{\rm {res}}}
\newcommand \red {{\rm{red}}}
\newcommand \Gal {{\rm {Gal}}}
\newcommand \PGL {{\rm {PGL}}}
\newcommand \Bl {{\rm {Bl}}}
\newcommand \Sing {{\rm {Sing}}}
\newcommand \spn {{\rm {span}}}
\newcommand \Nm {{\rm {Nm}}}
\newcommand \inv {{\rm {inv}}}
\newcommand \codim {{\rm {codim}}}
\newcommand \Div{{\rm{Div}}}
\newcommand \sg {{\Sigma }}
\newcommand \DM {{\sf DM}}
\newcommand \Gm {{{\mathbb G}_{\rm m}}}
\newcommand \tame {\rm {tame }}
\newcommand \znak {{\natural }}
\newcommand \lra {\longrightarrow}
\newcommand \hra {\hookrightarrow}
\newcommand \rra {\rightrightarrows}
\newcommand \ord {{\rm {ord}}}
\newcommand \Rat {{\mathscr Rat}}
\newcommand \rd {{\rm {red}}}
\newcommand \bSpec {{\bf {Spec}}}
\newcommand \Proj {{\rm {Proj}}}
\newcommand \pdiv {{\rm {div}}}
\newcommand \CH {{\it {CH}}}
\newcommand \wt {\widetilde }
\newcommand \ac {\acute }
\newcommand \ch {\check }
\newcommand \ol {\overline }
\newcommand \Th {\Theta}
\newcommand \cAb {{\mathscr A\! }{\it b}}

\newenvironment{pf}{\par\noindent{\em Proof}.}{\hfill\framebox(6,6)
\par\medskip}

\newtheorem{theorem}[subsection]{Theorem}
\newtheorem{conjecture}[subsection]{Conjecture}
\newtheorem{proposition}[subsection]{Proposition}
\newtheorem{lemma}[subsection]{Lemma}
\newtheorem{remark}[subsection]{Remark}
\newtheorem{remarks}[subsection]{Remarks}
\newtheorem{definition}[subsection]{Definition}
\newtheorem{corollary}[subsection]{Corollary}
\newtheorem{example}[subsection]{Example}
\newtheorem{examples}[subsection]{examples}

\title{Algebraic cycles on the Fano variety of lines of a cubic fourfold }
\author{Kalyan Banerjee}

\address{Tata Institute of Fundamental Research, Mumbai, India}

\email{kalyan@math.tifr.res.in}

\begin{abstract}
  In this text we prove that if a smooth cubic in $\PR^5$ has its Fano variety of lines birational to the Hilbert scheme of two points on a K3 surface, then there exists a smooth projective curve or a smooth projective surface embedded in the Fano variety, such that the kernel of the push-forward  (at the level of zero cycles ) induced  by the closed embedding is torsion.
\end{abstract}
\maketitle

\section{Introduction}
In the article \cite{BG} the authors were discussing the kernel of the push-forward homomorphism at the level of algebraically trivial one cycles modulo rational equivalence, induced by the closed embedding of a smooth hyperplane section of a cubic fourfold into the cubic itself. It has been proved that the kernel of this homomorphism is countable when we consider a very general hyperplane section. It was a natural question at that time that can we formulate the rationality problem of a very general cubic fourfold in terms of this kernel. That is what is the obstruction to rationality in terms of the kernel.

In \cite{GS}[theorem 7.5] it has been proved that if the ground field $k$ is such that the class of the affine line is not a zero divisor in the Grothendieck ring of varieties over $k$ and if the cubic fourfold is rational then the Fano variety of lines on the cubic fourfold is birational to the Hilbert scheme of two points on a K3 surface. This is the starting points of this paper. Although it is a point to be noted that this assumption is false, \cite{BO}, when the ground field is $\mathbb C$. We start with a birational map from the Hilbert scheme of two points on a K3 surface to theh Fano variety of lines and try to formulate the cycle-theoretic consequence of it,  in terms of zero cycles on the Fano variety. It is also known, due to Hassett \cite{Ha}  that for a certain class of rational cubic fourfolds the Hilbert scheme is actually isomorphic to the Fano variety of lines.

To proceed in this direction of studying  the zero cycles on the Fano variety of lines, we study algebraic cycles on symmetric powers of a smooth projective variety following Collino \cite{Collino}, where the algebraic cycles on symmetric powers of a smooth projective curve was studied. It has been proved in \cite{Collino} that the closed embedding of one symmetric power of a smooth projective curve into a bigger symmetric power of the same curve induces injective push-forward at the level of Chow groups. We follow this approach and try to understand the algebraic cycles on the blowup of $\Sym^2 S$ along the diagonal where $S$ is a smooth projective surface. This blowup is the Hilbert scheme of two points on a smooth projective surface. Following Collino's approach we study a correspondence between the Hilbert scheme of two points on a surface and the strict transform the surface itself under the blowup along the diagonal. Our main result of this paper is the following:

\textit{Let $X$ be a smooth cubic fourfold in $\PR^5$ and let $F(X)$ be the Fano variety of lines on the cubic fourfold $X$. Let the cubic $X$ is such that there exists a birational map from the Hilbert scheme of two points on a fixed K3 surface to the Fano variety of $X$ and let the indeterminacy locus of the rational map $\Hilb^2 S\to F(X)$ is smooth. Then there exists a non-rational curve $D$ in $F(X)$ such that  the push-forward induced by the inclusion of $D$ into $F(X)$ has torsion kernel.}


{\small \textbf{Acknowledgements:} The author wishes to thank Vladimir Guletskii for suggesting the idea of studying the relation between torsion elements in Chow groups and non-rationality of cubics in $\PR^5$. The author thanks the anonymous referee for his careful reading of the manuscript and for suggesting relevant modifications. The author is also grateful to Marc Levine for pointing out a technical mistake in the proof of theorem 2.1 and to V.Srinivas and C.Voisin for  useful discussions relevant to the theme of the paper. Author is indebted to Mingmin Shen for pointing out a mistake in section 4. The author remains grateful to B.Hassett for improving the conclusion of the main theorem. Finally the author also wishes to thank the ISF-UGC grant for funding this project and hospitality of Indian Statistical Institute, Bangalore Center for hosting this project.}

\section{Algebraic cycles on Hilbert schemes}
In this section we are going to prove the following. Let $S$ be a smooth, projective algebraic surface. Consider the diagonal embedding of $S$ into $\Sym^2 S$. Then the blow up of $\Sym^2 S$ along $S$ is the Hilbert scheme of two points on $S$. Let $\wt{\Sym^2 S}$ denote this Hilbert scheme and let $\wt{S}$ denote the inverse image of $S$ in $\wt{\Sym^2 S}$ (This copy of $S$ is different from the image of the diagonal embedding and it is the image of the closed embedding $s\mapsto [s,p]$). Then the embedding of $\wt{S}$ into $\wt{\Sym^2 S}$ induces an injective push-forward homomorphism at the level of Chow group of zero cycles.

To prove that we prove that if blow up $\Sym^n X$ along a subvariety $Z$, which does not contain a copy of $\Sym^m X$ for $m\leq n$, then the closed embedding of the inverse image of $\Sym^m X$ into the blow up of $\Sym^n X$ induces injective push-forward homomorphism at the level of Chow groups of zero cycles.

\begin{theorem}
\label{theorem 1}
Let $X$ be a smooth projective variety. Let $\wt{\Sym^n X}$ be the blow up of $\Sym^n X$ along a smooth subvariety $Z$, such that the blow-up will be a smooth projective variety. Let $\Sym^m X$ intersect  $Z$ transversally. Then the closed embedding of the strict transform  of $\Sym^m X$ into $\wt{\Sym^n X}$ induces an injective push-forward homomorphism at the level of Chow groups of zero cycles.
\end{theorem}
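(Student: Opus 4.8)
The plan is to reduce the statement, via the birational geometry of the blow-up, to an injectivity statement of Collino type downstairs, and then to transport it upstairs by an explicit correspondence. First I would record the effect of the transversality hypothesis: since $\Sym^m X$ meets $Z$ transversally, the strict transform of $\Sym^m X$ inside $\wt{\Sym^n X}$ is canonically the blow-up of $\Sym^m X$ along the smooth centre $Z\cap\Sym^m X$, with exceptional divisor the restriction of the exceptional divisor $E$ of $\pi\colon\wt{\Sym^n X}\to\Sym^n X$. In particular this strict transform is smooth, and the inclusion $i\colon\wt{\Sym^m X}\hookrightarrow\wt{\Sym^n X}$ fits into the square $\pi\circ i=j\circ\pi'$, where $j\colon\Sym^m X\hookrightarrow\Sym^n X$ is the base-point embedding $\xi\mapsto\xi+(n-m)p_0$ and $\pi',\pi$ are the two blow-down maps.

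The two blow-downs induce isomorphisms on zero-cycles. Here I would \emph{not} invoke the blow-up formula for Chow groups, which requires a smooth base and fails for the singular variety $\Sym^n X$ --- this is exactly where one must be careful; instead I would use that the fibres of $\pi$ and $\pi'$ are projective spaces over their blow-up centres, hence rationally connected, so that any two points of a single fibre are rationally equivalent in the total space. The standard consequence is that $\pi_*\colon CH_0(\wt{\Sym^n X})\to CH_0(\Sym^n X)$ and $\pi'_*\colon CH_0(\wt{\Sym^m X})\to CH_0(\Sym^m X)$ are isomorphisms even though $\Sym^n X$ is singular.

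Next I would construct the operative correspondence. Let $\Gamma_0\subset\Sym^n X\times\Sym^m X$ be the incidence locus $\{(\eta,\xi):\xi\le\eta\}$ of effective sub-cycles, and let $\Gamma$ be its proper transform in $\wt{\Sym^n X}\times\wt{\Sym^m X}$. Because $\wt{\Sym^n X}$ is smooth and projective, $\Gamma$ acts on zero-cycles, giving $\Gamma_*\colon CH_0(\wt{\Sym^n X})\to CH_0(\wt{\Sym^m X})$; it is essential that we work on the smooth model, since on the singular $\Sym^n X$ the intersection product defining the action of $\Gamma_0$ is not available. I would then evaluate $T:=\Gamma_*\circ i_*$ by transporting it, through the isomorphisms $\pi_*,\pi'_*$ of the previous step, to the downstairs computation $\Gamma_{0*}\circ j_*$. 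For a generic $\eta=j(\xi)=\xi+(n-m)p_0$ the degree-$m$ sub-cycles of $\eta$ are $\xi$ itself together with cycles of strictly larger multiplicity at $p_0$, so that $T=\id+N$, where $N$ raises the multiplicity at the base point $p_0$.

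Finally I would show that $T$ is injective, whence $i_*$ is injective. In the case $m=1$ relevant to the application this is immediate: there $T(\alpha)=\alpha+(n-1)\deg(\alpha)[p_0]$, and taking degrees forces $\deg\alpha=0$ and hence $\alpha=0$. For general $m$ I would filter $CH_0(\wt{\Sym^m X})$ by the multiplicity at $p_0$ and run the same degree argument on the associated graded, i.e. an induction that peels off one base point at a time along $\Sym^m X\hookrightarrow\Sym^{m+1}X\hookrightarrow\cdots\hookrightarrow\Sym^n X$. The main obstacle is the third step: honestly defining and evaluating the action of the incidence correspondence across the blow-down --- matching the proper transform $\Gamma$ upstairs with the naive incidence $\Gamma_0$ downstairs, and controlling the exceptional contributions supported over $E$ --- is the delicate point, and is precisely where the transversality hypothesis and the smoothness of $\wt{\Sym^n X}$ are indispensable.
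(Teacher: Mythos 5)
Your architecture is genuinely different from the paper's. The paper never asserts that the blow-down induces an isomorphism on $\CH_0$; instead it pulls Collino's correspondence back along $f\times f'$ (using that these are l.c.i.\ morphisms of monoidal transformations), computes $(j\times\id)^*\Gamma'=(f'\times f')^*(\Delta+Y)=\Delta+E_1+(\text{boundary terms})$ at the level of cycles, moves the given zero-cycle off the exceptional locus by the moving lemma, and runs an induction on $m$ through the localisation sequence for $\wt{\Sym^{m-1}X}\subset\wt{\Sym^m X}$. You instead make the claim that $\pi_*$ and $\pi'_*$ are isomorphisms on $\CH_0$ the load-bearing step. Note that if that claim is granted, your steps three and four are redundant: proper push-forward gives $\pi_*\circ i_*=j_*\circ\pi'_*$, so $i_*$ is injective if and only if $j_*\colon\CH_0(\Sym^m X)\to\CH_0(\Sym^n X)$ is, and the latter is exactly the Collino-type statement downstairs. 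That would be a cleaner route than the paper's, but you would then have to prove the downstairs injectivity directly rather than re-running the correspondence upstairs, where the identification $T=\id+N$ ``for generic $\eta$'' and the filtration by multiplicity at $p_0$ require precisely the control of contributions supported over $E$ and over $\Sym^{m-1}X$ that the paper supplies via the moving lemma and the localisation sequence; your proposal acknowledges this as the delicate point but does not supply it.

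The genuine gap is in your second step. Rational connectedness of the fibres gives surjectivity of $\pi_*$ and shows that two points of one fibre are rationally equivalent upstairs, i.e.\ that the candidate inverse is well defined on generators; it does not by itself show that rational equivalences on the singular $\Sym^n X$ lift to $\wt{\Sym^n X}$, which is the actual content of injectivity of $\pi_*$. (Compare conic bundles without sections: all fibres are rationally connected, yet injectivity of the push-forward on $\CH_0$ is not formal; for non-birational maps it can fail to be obvious or even true without further hypotheses.) In the present birational situation the statement is true, but it needs the argument: given $\pi_*\alpha=\sum\pdiv(f_i)_{C_i}$, lift each relation along the strict transform $\wt{C_i}\to C_i$ when $C_i\not\subset Z$, and along a section of the projective bundle $E|_{C_i}\to C_i$ when $C_i\subset Z$; the leftover cycle $\alpha-\beta$ then has $\pi_*(\alpha-\beta)=0$ as a cycle, hence is fibrewise of degree zero on projective-space fibres and therefore rationally trivial. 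Without some such lifting argument the reduction on which your whole proof rests is unsupported.
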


\begin{proof}
To prove the theorem we follow the approach as in \cite{Collino}. First consider the correspondence $\Gamma$ on $\Sym^n X\times \Sym^m X$ given by
$$(\pi_n\times \pi_m)(Graph(pr))$$
where $pr$ is the projection from $X^n$ to $X^m$ and $\pi_i$ is the natural quotient morphism from $X^i$ to $\Sym^i X$. Let $f$ be the natural morphism from $\wt{\Sym^n X}$ to $\Sym^n X$ and $f'$ be the restriction of $f$ to $\wt{\Sym^m X}$, which is the inverse image of $\Sym^m X$. Consider the correspondence $\Gamma'$ given by $(f\times f')^*(\Gamma)$. Then a computation following \cite{Collino} shows that $\Gamma'_*j_*$ is induced by $(j\times id)^*(\Gamma')$, where $j$ is the closed embedding of $\wt{\Sym^m X}$ into $\wt{\Sym^n X}$. Here we can consider the pull-back by $f\times f'$ and by $j\times id$, because they are local complete intersection morphisms since we are considering monoidal transformations \cite{Fulton}[page 332, section 17.5]. Consider the following diagram.

$$
  \diagram
   \wt{Sym^m X}\times \wt{\Sym^m X}\ar[dd]_-{f'\times f'} \ar[rr]^-{j\times id} & & \wt{\Sym^n X}\times \wt{\Sym^m X} \ar[dd]^-{f\times f'} \\ \\
  \Sym^m X\times \Sym^m X \ar[rr]^-{i\times id} & & \Sym^n X\times \Sym^m X
  \enddiagram
  $$
This diagram gives us the following at the level of algebraic cycles.
$$(j\times id)^*(f\times f')^*(\Gamma)=(f'\times f')^*(i\times id)^*(\Gamma)$$
that is equal to
$$(f'\times f')^*(\Delta+Y)$$
by \cite{Collino}, where $Y$ is supported on $\Sym^m X\times \Sym^{m-1}X$, and $\Delta$ is the diagonal of $\Sym^m X\times \Sym^m X$.
Now
$$(f'\times f')^*(\Delta)=\Delta+E_1$$
where
$$E_1\subset(E\times E)\cap (\wt{\Sym^m X}\times \wt{\Sym^m X}) $$
 $E$is the exceptional locus, that is the pre-image of $Z$.
So then by the Chow moving lemma, we can take the support of a zero cycle away from $E\cap \wt{\Sym^m X}$, which is a proper closed subscheme of $\wt{\Sym^m X}$, and we get
$$\rho^*\Gamma'_*j_*(z)=\rho^*(z+z_1)=\rho^*(z)$$
where $\rho$ is the open embedding of the complement of $ \wt{\Sym^{m-1}X}$ in $\wt{\Sym^m X}$, and $z_1$ is supported on
$\wt{\Sym^{m-1}X}$. Now consider the following commutative diagram with the rows exact.

$$
  \xymatrix{
     \CH^*(\wt{\Sym^{m-1}X}) \ar[r]^-{j'_{*}} \ar[dd]_-{}
  &   \CH^*(\wt{\Sym^m X}) \ar[r]^-{\rho^{*}} \ar[dd]_-{j_{*}}
  & \CH^*(U)  \ar[dd]_-{}  \
  \\ \\
   \CH^*(\wt{\Sym^{m-1}X}) \ar[r]^-{j''_*}
    & \CH^*(\wt{\Sym^{n} X}) \ar[r]^-{}
  & \CH^*(V)
  }
$$
Suppose that $j_*(z)=0$, by the above discussion we have
$\rho^*\Gamma'_*j_*(z)=\rho^*(z)=0$. Hence by the localisation exact sequence there exists $z'$ such that $j'_*(z')=z$.  So by induction $\CH_*(\wt{\Sym^{m-1}X})$ to $\CH_*(\wt{\Sym^n X})$ is injective.

So we have $z'=0$ and hence $z=0$. So we have the theorem.

\end{proof}

Let $S$ be a smooth projective algebraic surface. Let us consider the diagonal embedding of $S$ into $\Sym^2 S$ and the embedding of $S$ into $\Sym^2 S$ given by $s\mapsto [s,p]$, where $p$ is a fixed closed point on $S$. Let $\wt{\Sym^2 S}$ be the blow up of $\Sym^2 S$ along the image of the diagonal embedding and let $\wt{S}$ be the strict transform of the other copy of $S$ prescribed by the late embedding.

\begin{corollary}
The closed embedding of $\wt{S}$ into $\wt{\Sym^2 S}$ induces injective push-forward homomorphism at the level of Chow groups of zero cycles.
\end{corollary}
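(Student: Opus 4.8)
The plan is to obtain the corollary as the special case of Theorem \ref{theorem 1} in which $X = S$, $n = 2$ and $m = 1$. With these choices $\Sym^n X = \Sym^2 S$ and $\Sym^m X = \Sym^1 S = S$, and I take the center of the blow-up to be $Z = \Delta_S$, the image of the diagonal embedding $S \hookrightarrow \Sym^2 S$. Then $\wt{\Sym^2 S}$ is exactly the blow-up described in the statement, and the strict transform of the copy $S_p = \{[s,p] : s \in S\}$ prescribed by $s \mapsto [s,p]$ is $\wt{S}$. Granting the hypotheses of Theorem \ref{theorem 1}, the push-forward $j_* \colon \CH_0(\wt{S}) \to \CH_0(\wt{\Sym^2 S})$ is injective, which is the assertion of the corollary. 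Here the inductive localisation step degenerates, since $\Sym^{m-1}S = \Sym^0 S$ is a point, so the conclusion follows from a single pass through the argument.

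What remains is to verify the three hypotheses of Theorem \ref{theorem 1} for this configuration. First, $Z = \Delta_S \cong S$ is smooth. Second, $\wt{\Sym^2 S}$ is smooth and projective: this is the classical fact that blowing up $\Sym^2 S$ along its diagonal resolves the quotient singularity and yields the Hilbert scheme of two points. Third, and this is the only point requiring genuine verification, I must show that $S_p$ meets $Z = \Delta_S$ transversally in the sense needed by the proof of Theorem \ref{theorem 1}.

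The delicate feature is that $\Sym^2 S$ is singular precisely along $\Delta_S$, so transversality cannot be tested on Zariski tangent spaces, which in fact coincide at the point of intersection. I would argue in local analytic coordinates $(x,y)$ on $S$ centred at $p$: writing $u = x_1 + x_2$, $s = y_1 + y_2$ and using the invariants $A = (x_1 - x_2)^2$, $B = (x_1 - x_2)(y_1 - y_2)$, $C = (y_1 - y_2)^2$, the surface $\Sym^2 S$ is locally $\Spec k[u,s,A,B,C]/(AC - B^2)$, with $\Delta_S = \{A = B = C = 0\}$ and $S_p$ parametrised by $(u,s) \mapsto (u,s,u^2,us,s^2)$. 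Set-theoretically $S_p \cap \Delta_S = \{[p,p]\}$, a single point. Tracking the ratio $[A : B : C] = [\lambda^2 : \lambda : 1]$ along $S_p$ with $\lambda = u/s$ shows that the strict transform $\wt{S} \to S_p$ has a single $\mathbb{P}^1$ over $[p,p]$ and is an isomorphism elsewhere; that is, $\wt{S}$ is the blow-up of $S_p \cong S$ at the point $[p,p]$, and it meets the exceptional divisor $E$ cleanly along this $\mathbb{P}^1$.

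With this local picture in hand the restriction $f' \colon \wt{S} \to S_p$ is the monoidal transformation of a smooth surface at a point, hence a local complete intersection morphism, so all the pull-backs in the proof of Theorem \ref{theorem 1} are defined and the correspondence computation applies verbatim; moreover $\wt{S} \cap E$ is the exceptional curve, a proper closed subset of $\wt{S}$, so the Chow moving-lemma step remains available. The main obstacle is therefore exactly the transversality check: one must confirm that, despite the coincidence of tangent directions at the singular point $[p,p]$, the strict transform $\wt{S}$ is genuinely the blow-up of $S$ at one point and meets $E$ in a proper closed subset, so that the argument of Theorem \ref{theorem 1} transfers without change.
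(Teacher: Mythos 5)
Your proposal takes the same route as the paper: the corollary is deduced directly from Theorem \ref{theorem 1} with $X=S$, $n=2$, $m=1$ and $Z=\Delta_S$ (the paper's proof is literally ``Follows from Theorem \ref{theorem 1}''). Your additional local computation verifying that $\wt{S}$ is the blow-up of $S$ at $[p,p]$ and meets the exceptional divisor in a single $\PR^1$ is correct and in fact supplies the transversality/l.c.i.\ verification that the paper leaves implicit.
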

\begin{proof}
Follows from \ref{theorem 1}.
\end{proof}

$\wt{\Sym^2 S}$ is nothing but the Hilbert scheme of two points on $S$.
The above corollary tell us in particular that the closed embedding of $\wt{S}$ into the Hilbert scheme of two points on a surface $S$ induces injective push-forward homomorphism at the level of Chow group of zero cycles.

Using the same technique we can prove that if we blow up the Hilbert scheme once again and consider the total transform of the surface $\wt{S}$, then the closed embedding of the total transform in the blow-up gives rise to an injective push-forward homomorphism at the level of Chow group of zero cycles.

\section{Blow down of symmetric powers}
In this section we are going to consider blow downs of $\Sym^n X$, for a smooth projective variety $X$, and consider the images of $\Sym^m X$ under the blow down. Then we want to investigate the nature of the push-forward homomorphism induced at the level of Chow groups by the closed embedding of the blow down of $\Sym^m X$ into the blow down of $\Sym^n X$.

\begin{theorem}
\label{theorem2}
Let $\wt{\Sym^n X}$ be a smooth blow down of $\Sym^n X$ and let $\wt{\Sym^m X}$ denote the image of $\Sym^m X$ under this blow down. Then the closed embedding of  $\wt{\Sym^m X}$ into $\wt{\Sym^n X}$ induces an injective push-forward homomorphism at the level of Chow groups.
\end{theorem}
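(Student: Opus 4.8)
The plan is to deduce the statement by comparing the square of symmetric powers with the square of their blow downs, and then to run a short diagram chase on zero cycles. Write $g\colon \Sym^n X\to \wt{\Sym^n X}$ for the blow down morphism and $g'\colon \Sym^m X\to \wt{\Sym^m X}$ for its restriction to $\Sym^m X$, so that by definition $\wt{\Sym^m X}=g(\Sym^m X)$. Denoting by $i\colon \Sym^m X\hra \Sym^n X$ and $j\colon \wt{\Sym^m X}\hra \wt{\Sym^n X}$ the two closed embeddings, the square
$$
  \diagram
   \Sym^m X \ar[dd]_-{g'} \ar[rr]^-{i} & & \Sym^n X \ar[dd]^-{g} \\ \\
   \wt{\Sym^m X} \ar[rr]^-{j} & & \wt{\Sym^n X}
  \enddiagram
$$
commutes, and since all four maps are proper, functoriality of proper push-forward gives $j_* g'_* = g_* i_*$ on $\CH_0$.

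Next I would assemble three facts about the induced maps on zero cycles. First, $i_*\colon \CH_0(\Sym^m X)\to \CH_0(\Sym^n X)$ is injective. This is the mechanism underlying Theorem \ref{theorem 1}, run without any blow up: the correspondence $\Gamma=(\pi_n\times\pi_m)(Graph(pr))$ satisfies $(i\times\id)^*\Gamma=\Delta+Y$ with $Y$ supported on $\Sym^m X\times\Sym^{m-1}X$ by \cite{Collino}, so $\Gamma_* i_*=\id+Y_*$ where $Y_*$ factors through the push-forward from $\Sym^{m-1}X$; an induction on $m$ then forces injectivity, since if $i_*\beta=0$ then $\beta=-Y_*\beta$ lies in the image of $\CH_0(\Sym^{m-1}X)$, and the embedding $\Sym^{m-1}X\hra \Sym^n X$ is injective on zero cycles by the inductive hypothesis. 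Second, $g_*\colon \CH_0(\Sym^n X)\to \CH_0(\wt{\Sym^n X})$ is an isomorphism: a blow down contracts a divisor to a centre of codimension at least two, so $g$ is the blow up of $\wt{\Sym^n X}$ along that centre, and the blow up formula \cite{Fulton} shows that the exceptional contributions to $\CH_k$ vanish for $k=0$, whence $g_*$ is an isomorphism on zero cycles. Third, $g'_*\colon \CH_0(\Sym^m X)\to \CH_0(\wt{\Sym^m X})$ is surjective, because $g'$ is proper and surjective onto its image and, over an algebraically closed ground field, every closed point of $\wt{\Sym^m X}$ is the image of a closed point of $\Sym^m X$.

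The diagram chase is then immediate. Suppose $\alpha\in\CH_0(\wt{\Sym^m X})$ satisfies $j_*\alpha=0$. By surjectivity of $g'_*$ write $\alpha=g'_*\beta$. Then $0=j_*g'_*\beta=g_* i_*\beta$, and injectivity of $g_*$ gives $i_*\beta=0$, whence $\beta=0$ by the injectivity of $i_*$; therefore $\alpha=g'_*\beta=0$, and $j_*$ is injective.

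The hard part will be the second ingredient, namely pinning down the geometry of the blow down precisely enough to invoke the blow up formula. One must verify that the locus contracted by $g$ is a divisor lying over a centre of codimension $\geq 2$, for only then is $\CH_0$ preserved, and — relatedly — that $\Sym^m X$ meets this locus so that $\wt{\Sym^m X}=g(\Sym^m X)$ really is the asserted blow down and $g'_*$ is well behaved; transversality of $\Sym^m X$ with the contracted locus is what I would use to guarantee this. The injectivity of $i_*$ for a general smooth projective $X$, rather than the curve case of \cite{Collino}, also has to be checked, but this is the direct induction with $\Gamma$ sketched above and presents no essential difficulty.
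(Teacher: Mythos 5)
Your argument is correct in outline but takes a genuinely different route from the paper's. The paper transports Collino's correspondence to the blow-down by proper push-forward, $\Gamma'=(f\times f')_*\Gamma$, computes $(j\times\id)^*\Gamma'=(f'\times f')_*(\Delta+Y)$ using only that $f'$ is birational (so the diagonal pushes forward to the diagonal with multiplicity one), and then reruns the localisation-sequence induction on $m$ downstairs, exactly as in Theorem \ref{theorem 1}. You instead prove injectivity upstairs for $i_*\colon\CH_0(\Sym^m X)\to\CH_0(\Sym^n X)$ and descend through the square via $j_*g'_*=g_*i_*$; the diagram chase is sound, and your first and third ingredients are fine (the first is the same Collino identity the paper itself relies on, the third is proper surjectivity on closed points). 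What your route buys is a clean separation of the combinatorial input from the birational geometry; what it costs is that you must know $g_*\colon\CH_0(\Sym^n X)\to\CH_0(\wt{\Sym^n X})$ is injective, which the paper never needs. You have correctly flagged this as the hard part, and it is genuinely delicate here: for $\dim X\ge 2$ the variety $\Sym^n X$ is singular along the diagonal, so if it is the blow up of the smooth $\wt{\Sym^n X}$ then the centre cannot be smooth with locally free normal cone, the exceptional locus need not be a projective bundle over the centre, and the blow-up formula in the form you invoke (exceptional contributions to $\CH_0$ vanish because the fibres are projective spaces) does not apply as stated. Injectivity of $g_*$ on $\CH_0$ can still be salvaged in characteristic zero --- precompose with a resolution of $\Sym^n X$ and use that the fibres of a proper birational morphism onto a smooth variety are rationally chain connected, hence $\CH_0$-trivial --- but that is substantially heavier machinery than anything in the paper. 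If you keep your route, either supply that argument or restrict to the case where the contracted locus is an honest projective bundle over a regularly embedded centre; otherwise the paper's device of pushing the correspondence forward and quoting only $(f'\times f')_*\Delta=\Delta$ is the more economical path.
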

\begin{proof}
As before consider the correspondence $\Gamma$ given by $(\pi_n\times \pi_m)(Graph(pr))$, where $pr$ is the projection from $X^n$ to $X^m$ and $\pi_i$ is the natural morphism from $X^i$ to $\Sym^i X$. Let $\wt{f}$ denote the morphism from $\Sym^n X$ to $\wt{\Sym^n X}$, and $f'$ its restriction to $\wt{\Sym^m X}$. Consider the correspondence $\Gamma'$ on $\wt{\Sym^n X}\times \wt{\Sym^m X}$ given by $(f\times f')_*(\Gamma)$. Then following \cite{Collino}, we have that, $\Gamma'_*j_*$ is induced by $(j\times id)^*(\Gamma')$, where $j$ is the closed embedding of $\wt{Sym^m X}$ into $\wt{\Sym^n X}$. Now consider the following Cartesian square.
$$
  \diagram
   {Sym^m X}\times {\Sym^m X}\ar[dd]_-{f'\times f'} \ar[rr]^-{i\times id} & & {\Sym^n X}\times {\Sym^m X} \ar[dd]^-{f\times f'} \\ \\
  \wt{\Sym^m X}\times \wt{\Sym^m X} \ar[rr]^-{j\times id} & & \wt{\Sym^n X}\times \wt{\Sym^m X}
  \enddiagram
  $$

Then since we have above Cartesian square and $f,f'$ are birational, we have that
$$(j\times id)^*\Gamma'=(j\times id)^*(f\times f')_*(\Gamma)$$
$$=(f'\times f')_*(i\times id)^*(\Gamma)=(f'\times f')_*(\Delta+Y)$$
Here $i$ is the closed embedding of $\Sym^m X$ into $\Sym^n X$, $\Delta$ is the diagonal of $\Sym^m X\times \Sym^m X$, and $Y$ is supported on $\Sym^m X\times \Sym^{m-1}X$. Now
$$(f'\times f')_*(\Delta)=\Delta$$
the diagonal of $\wt{\Sym^m X}\times \wt{\Sym^m X}$ and $(f'\times f')_*(Y)$ is supported on $\wt{\Sym^m X}\times \wt{\Sym^{m-1}Y}$. The multiplicity of the diagonal of $\wt{\Sym^m X}\times \wt{\Sym^m X}$ is $1$, because the morphism $f$ from $\Sym^n X$ to $\wt{\Sym^n X}$ is birational and the multiplicity of $\Delta$ of $\Sym^m X\times \Sym^m X$ is $1$.

Let $\rho$ be the open embedding of the complement of $\wt{\Sym^{m-1}X}$ into $\wt{\Sym^m X}$. Then the above computation shows that
$$\rho^*\Gamma'_*j_*(z)=\rho^*(z+z_1)=\rho^*(z)$$
where $z_1$ is supported on $\wt{\Sym^{m-1}X}$. Now consider the following commutative square with the rows exact.
$$
  \xymatrix{
     \CH^*(\wt{\Sym^{m-1}X}) \ar[r]^-{j'_{*}} \ar[dd]_-{}
  &   \CH^*(\wt{\Sym^m X}) \ar[r]^-{\rho^{*}} \ar[dd]_-{j_{*}}
  & \CH^*(U)  \ar[dd]_-{}  \
  \\ \\
   \CH^*(\wt{\Sym^{m-1}X}) \ar[r]^-{j''_*}
    & \CH^*(\wt{\Sym^{n} X}) \ar[r]^-{}
  & \CH^*(V)
  }
$$
Suppose that $j_*(z)=0$, then we have $\rho^*\Gamma'_*j_*(z)=0$, that gives us $\rho^*(z)=0$. By the localisation exact sequence we have that $z=j'_*(z')$. By the commutativity of the above diagram we have that
$j''_*(z')=0$ and by induction hypothesis we have that $z'=0$ and hence $z=0$. So the homomorphism $j_*$ is injective.

\end{proof}

\section{Fano variety of lines on a cubic fourfold and algebraic cycles}
In this section we consider the  cubic fourfolds and the Fano variety of lines on them.  Suppose that the Fano variety of lines admit of a bi-rational map from the Hilbert scheme of two points on a K3 surface $S$. The precise theorem is as follows.



\begin{theorem}
\label{theorem4}
Let $X$ be a cubic fourfold. Let $F(X)$ denote the Fano variety of lines on $X$. Suppose that there exists a birational, dominant map from $\Hilb^2(S)$, the Hilbert scheme of two points on a fixed surface $S$, to $F(X)$. Let $\wt{S}$ be the strict transform of $S$, under the Blow up from $\Sym^2 S$ to $\Hilb^2(S)$,which intersect the indeterminacy locus of the above rational map transversally. Then we have a codimension $2$ subvariety $D$ in $F(X)$,  such that the kernel of the pushforward $\CH_0(D)$ to $\CH_0(F(X))$ is torsion.
\end{theorem}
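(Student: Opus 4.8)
The plan is to move the injectivity statement of the corollary above across the birational map, and then to push the resulting surface $\wt{S}$ over to $F(X)$. First I would resolve the indeterminacy of the rational map $\phi$ from $\Hilb^2(S)$ to $F(X)$. Since the indeterminacy locus $B$ is assumed smooth, the blow-up $\sigma\colon Z\to \Hilb^2(S)$ of $\Hilb^2(S)$ along $B$ is a smooth projective variety on which $\phi$ becomes a morphism $\tau\colon Z\to F(X)$, with $\tau=\phi\circ\sigma$. As $\sigma$ is a monoidal transformation with smooth centre, the blow-up formula gives an isomorphism $\sigma_*\colon \CH_0(Z)\xrightarrow{\sim}\CH_0(\Hilb^2(S))$ on zero cycles, and $\tau$, being a birational morphism of smooth projective varieties, induces an isomorphism $\tau_*\colon \CH_0(Z)\xrightarrow{\sim}\CH_0(F(X))$; this birational invariance of the Chow group of zero cycles is exactly the phenomenon already exploited in Theorems \ref{theorem 1} and \ref{theorem2}.

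Next I would track $\wt{S}$ through this diagram. By hypothesis $\wt{S}$ meets $B$ transversally, so its strict transform $\wt{S}'$ in $Z$ is the blow-up of the smooth surface $\wt{S}$ along the finite scheme $\wt{S}\cap B$; hence $\wt{S}'$ is smooth and $\sigma_*\colon \CH_0(\wt{S}')\xrightarrow{\sim}\CH_0(\wt{S})$, since blowing up points on a surface does not alter $\CH_0$. I then set $D:=\tau(\wt{S}')$, the closure of the image of $\wt{S}$ under $\phi$; because $\phi$ is birational it preserves dimension, so $D$ is a surface, i.e. a codimension $2$ subvariety of $F(X)$, and $\bar\nu:=\tau|_{\wt{S}'}\colon \wt{S}'\to D$ is a proper birational morphism. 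Combining the corollary with the two isomorphisms above yields an injective composite
$$
\CH_0(\wt{S}')\xrightarrow{\;\sigma_*\;}\CH_0(\wt{S})\hookrightarrow \CH_0(\Hilb^2(S))\xrightarrow{\;\sigma_*^{-1}\;}\CH_0(Z)\xrightarrow{\;\tau_*\;}\CH_0(F(X)),
$$
and the commutativity of the square relating $\wt{S}'\hookrightarrow Z$ to $\wt{S}\hookrightarrow\Hilb^2(S)$ identifies this composite with $j_*\circ\bar\nu_*$, where $j\colon D\hookrightarrow F(X)$ is the inclusion.

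Finally I would read off the conclusion. Since $\bar\nu$ is proper and surjective, the pushforward $\bar\nu_*\colon \CH_0(\wt{S}')\to\CH_0(D)$ is surjective, whence $\ker(j_*)=\bar\nu_*\big(\ker(j_*\circ\bar\nu_*)\big)$. As $j_*\circ\bar\nu_*$ is the injective composite above, this formally gives $\ker(j_*)=0$, which is a fortiori torsion. The delicate point — and the reason the conclusion is phrased only up to torsion — is that $D$ is in general singular, so that both the integral birational invariance $\tau_*\colon \CH_0(Z)\xrightarrow{\sim}\CH_0(F(X))$ and the identification of the composite with $j_*\circ\bar\nu_*$ must be justified through the resolution $\bar\nu$, which contracts curves to the singular points of $D$. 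The hard part will be controlling the zero cycles supported on these contracted curves over the singular locus of $D$: doing this integrally requires weak factorization together with Roitman-type information on the torsion of $\CH_0$, and it is precisely the classes one cannot rigidify over that locus that account for the torsion allowed in the statement.
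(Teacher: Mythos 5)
Your route is genuinely different from the paper's at the decisive step. The paper never invokes birational invariance of $\CH_0$; it pushes the Collino correspondence forward to $F(X)\times D$ and computes $(h\times \id)^*(\Gamma''')=d\Delta+(\text{cycles supported on products with rational loci})$, and it is precisely the multiplicity $d$ and the need to control $\CH_0$ of the possibly singular rational curves in the residual term that force the conclusion down to ``torsion kernel''. Your argument --- the corollary to Theorem \ref{theorem 1}, the blow-up formula for $\sigma$, the isomorphism $\tau_*\colon\CH_0(Z)\to\CH_0(F(X))$, functoriality $j_*\circ\bar\nu_*=\tau_*\circ i'_*$, and surjectivity of $\bar\nu_*$ --- is formally sound and, once its inputs are granted, even yields $\ker(j_*)=0$, which is stronger than the statement. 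But note that the integral birational invariance of $\CH_0$ for smooth projective varieties is a theorem in its own right (graph-correspondence argument as in \cite{Fulton}, Ch.~16, or weak factorization plus the blow-up formula in characteristic zero); it is not literally ``the phenomenon exploited'' in Theorems \ref{theorem 1} and \ref{theorem2}, which concern push-forwards from proper subvarieties, so you must cite or prove it.

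There are two concrete gaps. First, ``because $\phi$ is birational it preserves dimension, so $D$ is a surface'' is not a proof: a birational morphism $\tau$ can contract subvarieties, and nothing in your setup prevents $\wt{S}'$ from lying inside the exceptional locus of $\tau$ (or $\wt S$ inside the indeterminacy locus), in which case $D$ would be a curve or a point and the codimension-$2$ claim fails. The paper devotes a paragraph to exactly this: it uses the family $S_p=\{[x,p]\,:\,x\in S\}$, shows the $\hat S_p$ cover $\widehat{\Hilb^2(S)}$ while the set of $p$ for which $\hat S_p$ lies in the bad loci is a proper closed subset, and chooses a good $p$. You need this or a substitute. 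Second, your closing paragraph misdiagnoses the difficulty: the identity $j_*\circ\bar\nu_*=\tau_*\circ i'_*$ is plain functoriality of proper push-forward and is unaffected by the singularities of $D$, and $\bar\nu_*$ is surjective on $\CH_0$ over an algebraically closed field for any proper surjection; neither weak factorization nor Roitman's theorem enters there. The only substantive inputs to be secured are the birational invariance of $\CH_0$ and the dimension of $D$, after which your argument closes as written.
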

\begin{proof}
Let us consider the rational map from $\Hilb^2(S)$ to $F(X)$. Let us resolve the indeterminacy of the rational map and we get a regular map
$$\widehat{\Hilb^2(S)}\to F(X)$$
where $\widehat{\Hilb^2(S)}$ denote the Blow up of $\Hilb^2(S)$ along the indeterminacy locus. Suppose that this locus is smooth and intersect a copy of $\wt{S}$ transvesally. Then first we prove that the closed embedding $\widehat{S}\to \widehat{\Hilb^2(S)}$ gives an injective push-forward homomorphism at the level of Chow groups of zero cycles, here $\widehat{S}$ is the strict transform of $\wt{S}$ under this blow-up.
Let $f_1$ be the natural morphism from $\widehat{\Hilb^2(S)}$ to $\Hilb^2(S)$ and $f_1'$ be the restriction of it to $\widehat{S}$. Consider the correspondence $\Gamma''$ on $\widehat{\Hilb^2(S)}\times \widehat{S}$ given by $(f_1\times f_1')^*(\Gamma')$. Let $\hat{j}$ denote the closed embedding of $\widehat{S}$ into $\widehat{\Hilb^2(S)}$. Then as previous we have that $\Gamma''_*\hat{j_*}$ is induced by $(\hat{j}\times id)^*(\Gamma'')$, which is equal to
$$(\hat{j}\times id)^*(f_1\times f_1')^*\Gamma'$$
which is
$$(f'_1\times f'_1)^*(\Delta+Y_1+E_1)$$
where $E_1$ is the intersection of the exceptional locus of the blow up $\Hilb^2(S)\to \Sym^2 S$ with $\wt{S}$ and $Y_1$ is supported on $\wt{S}\times \PR^1$. Then we have
$$(f'_1\times f'_1)^*(\Delta+Y_1+E_1)=\Delta+\hat{E}_2+\hat{E_1}+\hat{Y_1}\;.$$
Here $\hat{E}_2$ is the intersection of the exceptional locus $\hat{E}$ of the blow up $\widehat{\Hilb^2(S)}\to \Hilb^2(S)$ with $\hat{S}$. Then consider the open embedding of the complement of $ \hat {Y}$ in $\hat{S}$ (where $\hat{Y}$ is a  $\PR^1$-bundle over $\PR^1$, or a rational curve). Call it $\rho$. We have by Chow moving lemma,
$$\rho^*\Gamma''_*\hat{j}_*(z)=\rho^*(z+z_1)=\rho^*(z)\;.$$
Here $z_1$ is supported on $A$. Consider the following commutative diagram where the rows are exact.
$$
  \xymatrix{
     \CH^*(\hat{Y}) \ar[r]^-{j'_{*}} \ar[dd]_-{}
  &   \CH^*(\hat{S}) \ar[r]^-{\rho^{*}} \ar[dd]_-{\hat{j}_{*}}
  & \CH^*(U)  \ar[dd]_-{}  \
  \\ \\
   \CH^*(\hat{Y}) \ar[r]^-{j''_*}
    & \CH^*(\widehat{\Hilb^2(S)}) \ar[r]^-{}
  & \CH^*(V)
  }
$$
Suppose that $\hat{j}_*(z)=0$, then we have $\rho^*\Gamma''_*\hat{j}_*(z)=0$, which gives $\rho^*(z)=0$. So there exists $z'$ such that $j_*(z')=z$. The map from $\CH_0(\hat{Y})\to \CH_0(\widehat{\Hilb^2 S})$ is injective. So $z'=0$, hence $z=0$.
Now we have a regular  (generically finite) map from $\widehat{\Hilb^2 (S)}$ to $F(X)$ and let $D$ denote the  image of $\hat{S}$.

First we  prove that $D$ is a surface, that is the dimension does not drop under the blow-down from $\widehat{\Hilb^2 (S)}$ to $F(X)$.
For that consider the universal family
$$\bcU=\{([x,y],p)|p\in [x,y]\}\subset \Sym^2 S\times S$$
consider its pull-back to $\Hilb^2 S$ and further to $\widehat{\Hilb^2(S)}$. Consider the projection from $\bcU\to S$. For $p\in S$, we have the fiber of the projection over $p$ is $S_p$, that is a copy of $S$ in $\Sym^2 S$ embedded by $x\mapsto [x,p]$. Now given any $[x,y]$ in $\Sym^2 S$, it belongs to some $S_p$. So the $S_p$'s cover $\Sym^2 S$. Therefore the strict transform of $S_p$'s, under the blow up from $\Sym^2 S$ to $\Hilb^2(S)$, cover $\Hilb^2(S)$. Hence there exists one $S_p$, such that $\wt{S_p}$ is not contained in the indeterminacy locus of the rational map from $\Hilb^2(S)$ to $F(X)$. Moreover we can prove that the collection of points in $S$ such that $S_p$ is not contained in the above mentioned indeterminacy locus is Zariski closed. Similarly the strict transforms $\hat{S_p}$ cover $\widehat{\Hilb^2(S)}$, hence there exists one $\hat{S_p}$, which is not contained in the blow-down locus of the map $\widehat{\Hilb^2(X)}\to F(X)$. The collection of $p$, such that $\hat{S_p}$ is contained in the blow-down locus is Zariski closed. Hence there exists $p$ such that $\wt{S_p}$ is not contained in  the indeterminacy locus of the rational map $\Hilb^2(S)\to F(X)$ and also not contained in the center of blow down of $\widehat{\Hilb^2(S)}\to F(X)$.  The image of that $\hat{S_p}$ is a surface. That is our $D$.

Then we prove that the kernel of the push-forward homomorphism from $\CH_0(D)$ to $\CH_0(F(X))$ is torsion. So let $h$ be the closed embedding of $D$ into $F(X)$ and $g$ the map from $\widehat{\Hilb^2(S)}$ to $F(X)$ and $g'$ its restriction to $\hat{S}$. Then consider the correspondence $\Gamma'''$ to be $(g\times g')_*(\Gamma'')$. The homomorphism $\Gamma'''_*h_*$ is induced by the cycle $(h\times id)^*(\Gamma''')$, which is
$$(h\times id)^*(g\times g')_*(\Gamma'')$$
and is equal to the fundamental cycle \cite{Fulton}[1.5] associated to (since $g$ is birational and generically finite)
$$(g'\times g')(\hat{j}\times id)^{-1}(\Gamma'')+\hat{E_3}$$
where $E_3$ is supported on the  product $(D\cap B)\times (D\cap B)$, $B$ is the image of the blown down variety.
So the above is equal to
$$(d\Delta+(g'\times g')_*(\hat{E}_2+\hat{E_1}+\hat{Y}))+\hat{E_3}$$
where $(g'\times g')(\hat{Y})$ is supported on $D\times V$ where $V$ is the image of the $\PR^1$ bundle $\hat{Y}$ or on $D\times C$ where $C$ is a rational curve (possibly singular) on $D$. So let $\rho$ be the open embedding of the complement of $g'(\hat{Y})$ in $D$. Then as previous we have that
$$\rho^*\Gamma'''_*h_*(z)=\rho^*(dz+z_1)$$

where $z_1$ is supported on $g'(\hat{Y})$ (this is obtained by Chow moving lemma). Then consider the following commutative diagram with the rows exact.
$$
  \xymatrix{
     \CH^*(g'(\hat{Y})) \ar[r]^-{h'_{*}} \ar[dd]_-{}
  &   \CH^*(D) \ar[r]^-{\rho^{*}} \ar[dd]_-{h_{*}}
  & \CH^*(U)  \ar[dd]_-{}  \
  \\ \\
   \CH^*(g'(\hat{Y})) \ar[r]^-{j''_*}
    & \CH^*(F(X)) \ar[r]^-{}
  & \CH^*(V)
  }
$$
Then if $h_*(z)=0$, it would follow that $d(\rho^*(z))=0$. So we have $h_*'(z')=dz$. Now $\CH_0(g'(\hat{Y}))\to \CH_0(F(X))$ is injective as $V$ is rational or $\CH_0(g'\hat{Y})$ is torsion since $C$ is rational, tell us that $z'=0$ or $d'z'=0$ so $dz=0$ or $dd'z=0$. So the kernel of $h_*$ is torsion.

\end{proof}

Now all this techniques can be applicable in the following setup. Let us embed $\Sym^2 S$ into some projective space. Let $C$ be a smooth hyperplane section of $S$ inside $\Sym^2 S$ such that $C$ intersects the diagonal transversally (or do not intersect). Then consider the strict transform of $\wt{C}$ inside $\wt{\Sym^2 S}=\Hilb^2(S)$, that gives us  push-forward homomorphism at the level of Chow group of zero cycles, which has torsion kernel. When we blow up $\Hilb^2 S$, then we get the strict transform $\hat{C}$, which is again inducing a  push-forward at the level of Chow groups from $\CH_0(\hat{C})$ to $\CH_0(\widehat{\Hilb^2(S)})$ having torsion kernel. Then we push-down everything to $F(X)$, we get that the image of $\hat{C}$, inside $F(X)$, say $C'$, is such that the push-forward $\CH_0(C')\to \CH_0(F(X))$ has torsion kernel. Now Consider an embedding of $\Sym^2 S$ into $\PR^N$ such that the general hyperplane section of $S$ is non-rational. Now consider the family
$$\bcH=\{([x,y],p,t)|p\in [x,y], [x,y]\in \Sym^2 S\cap H_t\}\subset \Sym^2 S\times S\times {\PR^N}^*$$
Now given any $p$ in $S$, the fiber of the projection $\bcH\to S$ is nothing but the family of hyperplane sections of $S_p$. A general such hyperplane section is non-rational, call it $C_t$. As in the previous Theorem \ref{theorem4}, there exists $p$, such that $\hat{S_p}$ is not contained in the exceptional divisor of the blow up $\widehat{\Hilb^2(S)}\to \Hilb^2(S)$ and also not in the center of blow down of the blow down $\widehat{\Hilb^2(S)}$ to $F(X)$. Consider a general hyperplane section of $S_p$, which intersects the diagonal of $\Sym^2 S$ properly. Then the strict transform $\hat{C_t}$ under two successive blow-ups of such a $C_t$ is non-rational. Also since $\hat{C_t}$ is not contained in the center of blow-down of the map $\widehat{\Hilb^2(S)}\to F(X)$, the image of $\hat{C_t}$ under the blow-down remains non-rational. So we have the following theorem:
\begin{theorem}
\label{theorem3}
Let $X$ be a smooth cubic fourfold in $\PR^5$ and let $F(X)$ be the Fano variety of lines on the cubic fourfold $X$. Let the cubic $X$ is such that there exists a birational map from the Hilbert scheme of two points on a fixed K3 surface to the Fano variety of $X$ and let the indeterminacy locus of the rational map $\Hilb^2 S\to F(X)$ is smooth. Then there exists a non-rational curve $D$ in $F(X)$ such that the push-forward induced by the inclusion of $D$ into $F(X)$ has torsion kernel.
\end{theorem}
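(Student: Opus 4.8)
The plan is to reduce Theorem~\ref{theorem3} to the machinery already developed for Theorem~\ref{theorem4}, but applied to a non-rational \emph{curve} cut out as a hyperplane section rather than to the full surface. First I would fix an embedding of $\Sym^2 S$ into a projective space $\PR^N$ that is sufficiently positive so that a general hyperplane section of a copy $S_p=\{[x,p]:x\in S\}$ is a smooth curve of high genus, hence non-rational; since $S$ is a K3 surface this is guaranteed by a Bertini argument together with the adjunction formula once the polarization is chosen ample enough. I would simultaneously arrange that such a general section meets the diagonal of $\Sym^2 S$ transversally (or is disjoint from it), so that its strict transforms under the two successive blow-ups are again smooth curves and the transversality hypotheses feeding into Theorems~\ref{theorem 1} and \ref{theorem4} are met.

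Next I would repeat the covering argument from the proof of Theorem~\ref{theorem4}: the copies $S_p$ sweep out $\Sym^2 S$, their strict transforms $\hat{S_p}$ sweep out $\widehat{\Hilb^2(S)}$, and the loci of $p$ for which $\hat{S_p}$ lies in the indeterminacy locus of $\Hilb^2 S\dashrightarrow F(X)$, in the exceptional divisor of the resolution, or in the center of the blow-down to $F(X)$, are each proper Zariski-closed subsets of $S$. Choosing $p$ outside all three, and then a general hyperplane section $C_t$ of $S_p$, I obtain a non-rational curve whose strict transform $\hat{C_t}$ avoids these bad loci; consequently the blow-down $\widehat{\Hilb^2(S)}\to F(X)$ restricts to a birational morphism on $\hat{C_t}$, and its image $D$ is again non-rational.

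The core is then the correspondence computation, carried out exactly as in Theorems~\ref{theorem 1}, \ref{theorem2} and \ref{theorem4} but with $C_t$ in place of the surface. Pulling back (resp. pushing forward) the Collino correspondence $\Gamma$ through the two blow-ups and the blow-down, the class $(\hat{j}\times\id)^*\Gamma''$ decomposes as $d\,\Delta$ plus error terms $\hat{E}_1+\hat{E}_2+\hat{Y}+\hat{E}_3$, each supported over a rational base: the pieces coming from $Y$ live over $\PR^1$, those from the exceptional divisors over $\PR^1$-bundles, and those from the blow-down center over a rational (possibly singular) curve. Running the localization exact sequence as in Theorem~\ref{theorem4}, the contribution of each error term to the kernel of the relevant pushforward is controlled either by injectivity of $\CH_0$ of a rational variety or by torsion of $\CH_0$ of a rational curve, while the diagonal contributes the factor $d$ equal to the degree of the generically finite map. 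Chaining the three stages yields that $\ker\big(\CH_0(D)\to\CH_0(F(X))\big)$ is annihilated by a fixed integer, hence torsion.

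The main obstacle I anticipate is the bookkeeping of the error terms: one must verify that after all three birational modifications every component of $\hat{E}_1+\hat{E}_2+\hat{Y}+\hat{E}_3$ really is supported over a rational (or at least $\CH_0$-trivial-up-to-torsion) base, and in particular that no such component acquires a non-rational base under the blow-down. A secondary point requiring care is the genus estimate forcing non-rationality of the general section \emph{inside} $\Sym^2 S$, which amounts to checking that the restriction of the chosen polarization to $S_p$ is sufficiently ample.
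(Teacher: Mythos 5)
Your proposal follows essentially the same route as the paper: choose an embedding of $\Sym^2 S$ so that a general hyperplane section of a copy $S_p$ is non-rational, use the covering/genericity argument of Theorem~\ref{theorem4} to pick $p$ and a section $C_t$ avoiding the indeterminacy, exceptional, and blow-down loci, and then run the Collino correspondence and localization sequence on the curve exactly as was done for the surface to conclude the kernel is torsion. Your added care about the adjunction/ampleness justification for non-rationality and the bookkeeping of error-term supports only makes explicit what the paper leaves implicit.
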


\subsection{Relation of the above main theorem with the work of Voisin}
\label{Subsec1}
In the recent work due to Voisin, \cite{Vo}, it has been proved that a cubic fourfold admits of a Chow theoretic decomposition of the diagonal if and only if it admits a cohomological decomposition of the diagonal. By Chow theoretic decomposition we mean that the diagonal on the two fold product on the cubic $X$ is rationally equivalent to $X\times x+Z$, where $Z$ is supported on $D\times X$, where $D$ is a proper closed subscheme of $X$. Cohomological decomposition means that such a decomposition holds at the level of Cohomology. It has been proved in \cite{Vo} that the Chow theoretic decomposition of the diagonal of $X$ is equivalent to have universal triviality of $\CH_0(X)$, meaning that $\CH_0(X_L)$ is isomorphic to $\mathbb Z$ for all field extensions $L$ of $k$.

\medskip

Let $D$ be the smooth projective surface inside $F(X)$, such that the push-forward induced by the closed embedding $j$ of $D$ into $F(X)$ is torsion. This torsionness actually follows from the fact that the cycle $(j\times id)\Gamma'''-d\Delta$ is rationally equivalent to $\hat{E}_2+\hat{E_1}+\hat{Y_1}$ , where this later cycle is supported on $Z\times D$ or $D\times Z'$, where $ Z,Z'$ are proper Zariski closed subsets in $D$. Let us call $(j\times id)^*(\Gamma''')=\Gamma$, consider its image under the homomorphism $\CH^2(D\times D)\to \CH_0(D_{k(D)})$, call it $\Gamma_{k(D)}$. Consider also the image of the diagonal under this homomorphism and denote it by $\delta$. Suppose that $\CH_0(D_{k(D)})$ is isomorphic to $\ZZ$. Then
$$\Gamma_{k(D)}-d\delta$$
is rationally equivalent to $nx_{k(D)}$, where $x$ is a closed $k$-point on $X$. The above would mean that the restriction of the cycle
$$\Gamma-d\Delta$$
is rationally equivalent to the restriction of $X\times x$ on $U\times X$, for some Zariski open $U$ inside $X$. Therefore
$$\Gamma-d\Delta$$
is rationally equivalent to $X\times x+Z$, where $Z$ is supported on $D\times X$, where $D$ is the complement of $U$ in $X$. Therefore the universal triviality of $\CH_0(D)$ is giving the decomposition of $\Gamma-d\Delta$, which further gives the torsionness of the kernel of $j_*$.

Therefore we have the following theorem:

\begin{theorem}
Universal triviality of $\CH_0(D)$ and the assumption that the Fano variety $F(X)$ is birational to the Hilbert scheme of two points on a K3 surface, implies the torsionness of the kernel of $j_*$, where $j$ is the closed embedding of $D$ into $F(X)$.
\end{theorem}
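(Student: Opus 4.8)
The plan is to run a decomposition-of-the-diagonal argument of Bloch--Srinivas type, built on the self-correspondence produced in the proof of Theorem \ref{theorem4}. Recall from there the surface $D\subset F(X)$, its closed embedding $j\colon D\to F(X)$, the correspondence $\Gamma'''$ on $F(X)\times D$, and the induced self-correspondence $\Gamma=(j\times \id)^*(\Gamma''')$ on $D\times D$. By construction $\Gamma_*=\Gamma'''_*\circ j_*$ as endomorphisms of $\CH_0(D)$, and $\Gamma-d\Delta$ is rationally equivalent to a cycle supported on $Z\times D\cup D\times Z'$ for proper closed subsets $Z,Z'\subsetneq D$, where $d$ is the degree of the generically finite blow-down $g\colon\widehat{\Hilb^2(S)}\to F(X)$.

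First I would restrict $\Gamma$ to the generic fibre of the first projection. The restriction homomorphism $\CH^2(D\times D)\to \CH_0(D_{k(D)})$ sends $\Gamma\mapsto \Gamma_{k(D)}$ and the diagonal $\Delta\mapsto\delta$, the class of the generic point regarded as a $k(D)$-rational point of $D_{k(D)}$. The $Z\times D$ part of $\Gamma-d\Delta$ dies under this restriction, its support lying over $Z$, which misses the generic point; so $\Gamma_{k(D)}-d\delta$ is represented by a zero-cycle on $D_{k(D)}$ supported on $Z'_{k(D)}$. Now I invoke the hypothesis: universal triviality gives $\CH_0(D_L)\cong\ZZ$ for every extension $L/k$, in particular for $L=k(D)$, so every zero-cycle on $D_{k(D)}$ equals (its degree) times a fixed $k$-rational point $x$. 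Hence $\Gamma_{k(D)}-d\delta=n\,x_{k(D)}$ for some integer $n$.

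Next I would spread this equality out. Because $\CH_0(D_{k(D)})$ is the filtered colimit of the groups $\CH^2(U\times D)$ over nonempty open $U\subseteq D$ in the first factor, the relation $\Gamma_{k(D)}-d\delta-n\,x_{k(D)}=0$ lifts to the vanishing of $\Gamma-d\Delta-n(D\times x)$ in $\CH^2(U\times D)$ for some such $U$; writing $W=D\setminus U$ and applying the localization sequence gives a decomposition $\Gamma=d\Delta+n(D\times x)+Z_0$ with $Z_0$ supported on $W\times D$. To conclude, I take $z\in\CH_0(D)$ with $j_*(z)=0$. Then $\deg z=\deg(j_*z)=0$ and $\Gamma_*(z)=\Gamma'''_*(j_*(z))=0$. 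Moving the support of $z$ off $W$, the correspondence $Z_0$ (supported on $W\times D$) annihilates $z$, the term $n(D\times x)$ acts as $n(\deg z)\,x=0$, and $d\Delta$ acts as multiplication by $d$; therefore $0=\Gamma_*(z)=dz$, so $\ker(j_*)$ is $d$-torsion.

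The hard part will be the spreading-out step and, more precisely, controlling on which factor the residual cycle $Z_0$ is supported: the argument needs $Z_0$ on $W\times D$, so that it kills zero-cycles avoiding $W$, whereas a term supported on $D\times W$ would only push the support of $z$ into $W$ without annihilating it. One must also verify that the constant term genuinely has the shape $D\times(\text{point})$, so that it acts through the degree and hence vanishes on the degree-zero class $z$. The smoothness of the indeterminacy locus and the transversality of $\wt{S}$ with it, assumed in Theorem \ref{theorem4}, are what guarantee that $\Gamma$, its pullbacks, and the moving lemma are all available over $D$.
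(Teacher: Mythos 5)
Your proposal is correct and follows essentially the same route as the paper: restrict $\Gamma-d\Delta$ to the generic fibre of the first projection, use universal triviality to identify $\Gamma_{k(D)}-d\delta$ with $n\,x_{k(D)}$, spread out via the localization sequence to obtain a decomposition with residual term supported on $W\times D$, and let the correspondence act on a degree-zero cycle moved off $W$ to conclude $dz=0$. You have in fact written out more carefully the final step (how the decomposition forces $\ker(j_*)$ to be $d$-torsion) and the support bookkeeping that the paper only sketches, but the underlying argument is the same.
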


Therefore if we can prove that $\ker(j_*)$ is non-torsion then that would imply the $\CH_0(D)$ is not universally trivial or the Fano variety is not birational to the Hilbert scheme of two points on a K3 surface.

It is worthwhile to mention that, in the paper \cite{MS}, Shen proved that the universal line correspondence $P$ on $F(X)\times X$ gives rise to surjective homomorphism from $\CH_0(F(X)_L)$ to $\CH_1(X_L)$ for all field extensions $L/k$. Here we prove something similar. Consider the base change of $D,F(X)$, by $k(D)$, then as before we can prove that the embedding of $D_{k(D)}$ into $F(X)_{k(D)}$ has torsion kernel at the Chow group of zero cycles. This follows from the observation that
$$(j\times id)^*(\Gamma''')_{k(D)}=(j_{k(D)}\times \id_{k(D)})^*(\Gamma'''_{k(D)})=d\Delta_{k(D)}-Y_{k(D)}$$
where $Y$ is supported on $Z\times D$ or $D\times Z'$. So we can say that the push-forward has torsion kernel universally.

\end{document}